\newcommand{\range}[2]{
    \{#1, \ldots, #2 \}
}
\newcommand{\E}{\exists}
\newtheorem{proposition}{Утверждение}
\newtheorem{theorem}{Теорема}
\begin{document}

\begin{flushright}
УДК 510.67
\end{flushright}

\begin{center}
{\Large
Логическая сложность свойства наличия индуцированного подграфа изоморфного данному для некоторых семейств графов\footnote{Настоящая работа выполнена при финансовой поддержке гранта РНФ 18-71-00069.}
}

\vspace{0.5cm}

{\large

М.Е. Жуковский\footnote{Московский физико-технический институт, лаборатория продвинутой комбинаторики и сетевых приложений}, Е.Д. Кудрявцев, М.В. Макаров, А.С. Шлычкова
}
\vspace{0.5cm}

    Аннотация\\
    
    \end{center}
    
Доказано, что для любого $\ell\geq 4$ найдется граф на $\ell$ вершинах и формула первого порядка кванторной глубины не более $\ell-1$, записывающая свойство содержать индуцированный подграф, изоморфный этому графу. Доказано, что формула первого порядка, записывающая свойство содержать индуцированный подграф на $\ell$ вершинах, изоморфный фиксированному дизъюнктному объединению изоморфных полных многодольных графов, имеет глубину по крайней мере $\ell$. Наконец, доказано, что для любого графа на $\ell\leq 5$ вершинах формула первого порядка, записывающая свойство содержать индуцированный подграф изоморфный заданному, имеет глубину по крайней мере $\ell-1$.
    
    \vspace{0.5cm}

Ключевые слова: формула первого порядка, кванторная глубина, индуцированный подграф, логическая сложность.

Keywords: first order sentence, quantifier depth, induced subgraph, logical complexity.

    \section{Введение}
    
    Рассмотрим логику первого порядка \cite{Veresh,Survey,Libkin2004} $\mathcal{F}$ над множеством конечных графов с сигнатурой, состоящей из двух предикатных символов $\sim, =$, обозначающих смежность вершин и равенство соответственно (оба предиката имеют арность 2).
    
    Под {\it свойствами графов} мы будем подразумевать такие множества графов, что любой класс изоморфизма либо принадлежит множеству целиком, либо вообще его не пересекает. Иными словами, если граф $G$ обладает некоторым свойством, то и любой изоморфный ему граф тоже обладает этим свойством.
    
    Будем говорить, что предложение $\varphi$ {\it выражает} свойство графов $\mathcal{C}$, если истинность предложения равносильна обладанию свойством. Иными словами, для любого графа $G$ выполнено
    $$
     G\models\varphi\Leftrightarrow G\in\mathcal{C}.
    $$
    
    В этой работе мы будем рассматривать свойство графов $\mathcal{C}[F]$ {\it содержать индуцированный подграф, изоморфный данному графу $F$} (граф $H$ называется {\it индуцированным подграфом} графа $G$, если множество вершин графа $H$ является подмножеством множества вершин графа $G$, а множество ребер индуцировано этим множеством вершин, т.е. любые две вершины $u,v$ графа $H$ смежны в $H$ тогда и только тогда, когда они смежны в $G$). Такие свойства представляют большой интерес как c теоретической точки зрения, так и для различных приложений. Отметим отдельно, что если бы свойство $\mathcal{C}[K_{\ell}]$ ($K_{\ell}$ --- это полный граф на $\ell$ вершинах) проверялось бы за $n^{o(\ell)}$ для $n$-вершинных графов, то неверна была бы гипотеза об экспоненциальном времени~\cite{Hypothe}.
    
    Наш интерес к рассмотрению свойств $\mathcal{C}[F]$ в контексте логической сложности обусловлен именно временной сложностью проверки этого свойства. Более формально, для графа $F$ обозначим через $D[F]$ минимальную {\it кванторную глубину} (под кванторной глубиной формулы подразумевается наибольшая длина последовательности вложенных кванторов в этой формуле; строгое определение см., например, в~\cite{Survey,Libkin2004}) предложения из $\mathcal{F}$, выражающего свойство $\mathcal{C}[F]$. Например, формула
    $$
    \E x_1 \E x_2 \E x_3\quad( x_1 \sim x_2 \land x_2 \sim x_3 \land x_1 \nsim x_3 \land x_1\neq x_3)
    $$
    записывает свойство содержать подграф, изоморфный $P_3$ (здесь и далее $P_{\ell}$ --- это простой путь на $\ell$ вершинах), и имеет глубину 3. Поэтому $D[P_3] \leq 3$. Очевидно, для любого графа $F$ на $\ell$ вершинах выполнено $D[F] \leq \ell$. Так как истинность формулы глубины $r$ на $n$-вершинном графе можно проверить за $O(n^r)$~\cite{Libkin2004}, то и определить принадлежность $n$-вершинного графа множеству $\mathcal{C}[F]$ можно за $O(n^{D[F]})$. Этот факт мотивирует интерес к нахождению величины $D[F]$ для произвольного графа $F$.\\
    
    Заметим, что, насколько нам известно, наилучшей верхней оценкой на сложность проверки $\mathcal{C}[F]$ в общем случае (для произвольного графа $F$ на $\ell$ вершинах) является $O\left(n^{\omega\ell/3+O(1)}\right)$~\cite{NesetrilP85,Eisenbrand}, где $\omega\in[2,2.373)$ --- экспонента быстрого перемножения матриц~\cite{Exp_Fast_Matrix}. Заметим, что $O(1)$ в показателе, разумеется, не зависит от $\ell$. Возникает естественный вопрос, а существуют ли графы, для которых $D[F]<\omega\ell/3$? К сожалению, этот вопрос остается открытым. Возможность положительного ответа обусловлена наилучшей известной нижней оценкой на $D[F]$ в общем случае~\cite{VerbitskyZ17}: 
    \begin{equation}
    D[F]\geq\max\left\{\left\lfloor\frac{1}{2}\ell-2\log_2\ell+3\right\rfloor,\chi(F),\frac{e(F)}{v(F)}+2\right\},
    \label{general_lower}
    \end{equation}
    где $\chi(F)$ --- хроматическое число графа $F$, $v(F)=\ell$ --- количество вершин графа $F$, а $e(F)$ --- количество ребер графа $F$. Действительно, легко видеть, что максимум из этих трех величин равен первой для почти всех графов (о хроматическом числе случайного графа можно почитать, например, в~\cite{Janson_RG}).
    
    Тем не менее, до настоящей работы был известен пример лишь одного графа (с точностью до реберного дополнения), для которого $D[F]<v(F)$. Так, из работы~\cite{VerbitskyZ17} известно, что для всех графов $F$ с $v(F)\leq 4$ выполнено $D[F]=v(F)$ тогда и только тогда, когда граф $F$ отличен от diamond-graph (т.е. треугольник с дополнительным ребром, иисходящим из одной из вершин треугольника к четвертой вершине) и от его реберного дополнения (т.е. от дизъюнктного объединения вершины и $P_3$). Для этих двух ``исключительных'' графов на 4 вершинах $D[F]=3$.
    
    Заметим, наконец, что из~(\ref{general_lower}), в частности, следует, что $D[K_{\ell}]=\ell$, так как $\chi(K_{\ell})=\ell$.
    
    Подробнее об истории этой задачи и о других смежных результатах можно прочитать, например, в \cite{VerbitskyZ17,VerbitskyZTOCS,ZDAN}.\\
    
    В настоящей работе мы, во-первых, доказали, что существуют графы любого размера, не меньшего 4, для которых $D[F]\leq v(F)-1$. Более того, таких графов достаточно много: подобный граф $F$ можно получить из произвольного графа, добавив к нему дизъюнктно вершину и $P_3$. Кроме того, мы значительно расширили класс графов, для которых $D[F]=v(F)$. Наконец, мы установили, что для всех графов на 5 вершинах $D[F]\geq 4.$ Поэтому, если и существует граф $F$ с $D[F]\leq v(F)-2$, то количество вершин такого графа не меньше 6.
    
    Формулировки описанных результатов и некоторые обсуждения приведены в разделе~\ref{results}. В разделе~\ref{ehren} описан основной инструмент получения нижних оценок величины $D[F]$, который мы использовали. Разделы~\ref{proof1},~\ref{proof2},~\ref{proof3} содержат доказательства результатов. А раздел~\ref{discussions} посвящен обсуждению открытых вопросов.
    
    \section{Основные результаты}
    \label{results}
    
    Ниже $A\sqcup B$ обозначает дизъюнктное объединение графов $A$ и $B$, $mA$ обозначает дизъюнктное объединение $m$ изоморфных копий графа $A$, а $K_1$ --- граф без ребер, состоящий из всего одной вершины.
    
    \begin{theorem}
        \label{t1_1}
        Пусть $H$ --- произвольный граф, $F= P_3\sqcup K_1\sqcup H$.
        Тогда $D[F] \leq v(F)-1$.
    \end{theorem}
    
    К сожалению, нам не удалось доказать, что эту оценку нельзя улучшить ни для каких графов $F$ обозначенного вида. Тем не менее, если $H$ --- пустой граф, то полученная оценка точна.
    
    \begin{theorem}
        \label{t1_2}
        Пусть $m$ --- произвольное натуральное число, $F= P_3\sqcup mK_1$.
        Тогда $D[F]=v(F)-1$.
    \end{theorem}
    
    Заметим, что, во-первых, граф из теоремы~\ref{t1_2} является дизъюнктным объединением двудольных графов, а, во-вторых, при удалении из него хотя бы одного ребра, получается граф, являющийся реберным дополнением к полному многодольному (с одной долей размера 2 и остальными размера 1). Поэтому, в некотором смысле, граф из теоремы~\ref{t1_2} является наиболее простым дизюънктным объединением {\it различных} двудольных графов. В случае же, если двудольные графы изоморфны, утверждение теоремы сразу становится неверным, а верен тривиальный результат $D[F]=v(F)$. Более того, это верно даже для объединения одинаковых {\it многодольных} графов.
    
    Пусть $n_1 \leq \ldots \leq n_k$ --- натуральные числа. Рассмотрим $F:=mK_{n_1, \ldots, n_k}$ --- дизъюнктное объединение $m$ графов изоморфных полному многодольному графу $K_{n_1, \ldots, n_k}$ на $k$ долях, размеры которых равны $n_1, \ldots, n_k$.
    Обозначим $\ell := m \sum\limits_{i=1}^{k} n_k$ число вершин в этом графе.
    
    \begin{theorem}
        \label{t2}
        Выполнено равенство $D[F] = \ell$.
    \end{theorem}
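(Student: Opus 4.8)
The bound $D[F]\le\ell$ is immediate, since $F$ has exactly $\ell$ vertices and every $\ell$-vertex graph $H$ satisfies $D[H]\le v(H)$ (as noted in the introduction). Hence the entire content is the matching lower bound $D[F]\ge\ell$, i.e. the assertion that no first-order sentence of quantifier depth $\ell-1$ defines $\mathcal{C}[F]$. I would establish this through the Ehrenfeucht game of Section~\ref{ehren}: it suffices to produce two graphs $A$ and $B$ with $A\in\mathcal{C}[F]$ and $B\notin\mathcal{C}[F]$ on which Duplicator wins the $(\ell-1)$-round game, because then $A$ and $B$ satisfy the same sentences of depth $\ell-1$ while lying on opposite sides of $\mathcal{C}[F]$.

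The guiding principle is to make $A$ and $B$ share \emph{every} induced subgraph on at most $\ell-1$ vertices, and to make both so homogeneous that any partial isomorphism between the chosen vertices extends. I would take $A$ to be a blow-up of $F$, namely the disjoint union of $m$ copies of $K_{N,\dots,N}$ with $k$ parts each of size $N\gg\ell$; it contains $F$ by selecting $n_i$ vertices in the $i$-th part of each copy. Vertices inside one part form a large class of (false) twins, so Duplicator can always answer a new Spoiler vertex with a fresh or repeated twin in the matching class. For $B$ I would take a homogeneous graph of the same local type that contains every proper induced subgraph of $F$ but no induced copy of $F$ itself. Then whenever Spoiler reveals at most $\ell-1$ vertices the induced pattern is a proper subgraph of $F$, hence present on both sides, and the large twin classes let Duplicator extend the correspondence.

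Duplicator's invariant is a partial isomorphism respecting the two-level block structure — the partition into components, and inside each component the partition into parts — together with the twin classes. Since all classes have size at least $\ell$, the only obstruction to maintaining this invariant is the completion of the full $F$-pattern, namely $m$ mutually non-adjacent components, each exhibiting $k$ pairwise-adjacent parts filled to the sizes $n_1,\dots,n_k$; realizing it forces all $\ell=m\sum_i n_i$ vertices to be placed. With only $\ell-1$ moves Spoiler can never achieve this, so Duplicator wins.

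The essential difficulty is the construction of $B$, equivalently the verification that the threshold is \emph{exactly} $\ell$. The three ingredients of $F$ — separating the $m$ components, producing a $k$-clique of parts within one component, and filling each part to its size $n_i$ — must combine \textbf{additively} to $m\sum_i n_i$, whereas the obvious deficiencies block only one ingredient and give much smaller thresholds: a graph with $m-1$ components, or with one component carrying only $k-1$ usable parts, or with one part too small, is separated from $A$ already after about $m$, $mk$, or $mkn_1$ rounds (for the last, the fooling subgraph $m\,K_{n_1,\dots,n_1}$ makes this explicit). The real work is therefore to design $B$ so that it hosts every induced subgraph of $F$ on $\ell-1$ vertices — each ``$F$ minus one vertex'' as well as all smaller patterns — while still excluding $F$, and then to prove that the twin-and-block strategy is legal for the whole $(\ell-1)$-round game. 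I expect the cleanest route to be an induction on the vertices of $F$, peeling one vertex (hence one round and one unit of $\ell$) at a time via a composition lemma for the Ehrenfeucht game, using the complement-invariance $D[F]=D[\overline{F}]$ where convenient to replace $F$ by a join of disjoint unions of cliques. Since the general estimate~(\ref{general_lower}) yields only about $\ell/2$, recovering this exact additive threshold is precisely what must be gained.
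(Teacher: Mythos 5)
Your reduction is the standard one and matches the framework of Section~\ref{ehren}: the upper bound $D[F]\le\ell$ is trivial, and the lower bound needs graphs $A\in\mathcal{C}[F]$, $B\notin\mathcal{C}[F]$ on which Duplicator survives $\ell-1$ rounds. But your proof stops exactly where the theorem begins: $B$ is never constructed, and you say yourself that this is ``the essential difficulty.'' Worse, the form you prescribe for $B$ cannot work. Your invariant (``partial isomorphism respecting the partition into components, the partition into parts, and the twin classes'') presupposes that $B$, like $A$, is a disjoint union of complete multipartite graphs with large twin classes. In such a graph an independent set must lie inside a single part and a connected pattern component inside a single host component, so already for $F=K_{2,2}$ (the case $m=1$, $k=2$, $n_1=n_2=2$, $\ell=4$) every $F$-free candidate $B$ has all components of the form $K_{a,1,\ldots,1}$, and Spoiler wins in three moves: he plays an edge $x_1\sim x_2$ in $A=K_{N,N}$; in Duplicator's reply $y_1\sim y_2$ at least one vertex, say $y_2$, lies in a singleton part; Spoiler then plays $x_3\neq x_2$ in the part of $x_2$, demanding $y_3\nsim y_2$, $y_3\sim y_1$, which is impossible ($y_3\nsim y_2$, $y_3\neq y_2$ forces $y_3$ into another component, killing $y_3\sim y_1$). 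Homogeneity is precisely what makes an $F$-free graph quickly distinguishable from $A$, so the twin-class strategy can never reach the additive threshold that you correctly identify as the crux.

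The paper's solution is to abandon homogeneity. It introduces the layered multipartite graphs $G^m_{\ell,k}$ on vertices $(i,j,h)$ (row, column, layer), where adjacency depends only on the equality pattern of the coordinates: same layer, different row and different column, or same row and different layer; for $m=1$, $k=2$ these are the crown graphs $K_{\ell,\ell}$ minus a perfect matching. With $G=G^m_{\ell,k}$ and $G'=G^m_{\ell-1,k}$, Duplicator wins the $(\ell-1)$-round game by copying the column and layer and mirroring rows through a partial bijection, which is always extendable since at most $\ell-1$ rows are ever touched (Proposition~\ref{prop:Dupwin}); $G$ contains an induced $F$ by placing its $\ell$ vertices in $\ell$ pairwise distinct rows (Proposition~\ref{prop:ind}); and---this is exactly the additive counting you wanted---any induced copy of $F$ in $G'$ would have to occupy pairwise distinct rows (Propositions~\ref{prop:unipart} and~\ref{prop:noind}, which use $k\ge 2$ and $n_k>1$), while $G'$ has only $\ell-1$ rows. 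The remaining cases $k=1$ and $n_k=1$ are then handled by complementation via $D[F]=D[\overline{F}]$, as you anticipated. So your proposal locates the right difficulty but neither resolves it nor sketches a construction that could; the step you defer to an unspecified ``composition lemma'' induction is the entire content of the paper's Section~\ref{proof2}.
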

    
    Заметим, наконец, что теорема~\ref{t1_1} позволяет лишь строить примеры графов $F$, для которых $D[F]\leq v(F)-1$, но мы до сих пор не знаем, существуют ли графы $F$, для которых $D[F]\leq v(F)-2$. Как было сказано во введении, для любого графа $F$ на не более $4$ вершинах $D[F]\geq v(F)-1$. Мы доказали, что то же самое верно и для графов на 5 вершинах, т.е. если $D[F]\leq v(F)-2$, то $v(F)\geq 6$.
    
    \begin{theorem}
    \label{t3}
    Для любого графа $F$ на $5$ вершинах выполнено неравенство $D[F]\geq 4$.
    \end{theorem}
    
    \section{Игра Эренфойхта--Фраиссе}
    \label{ehren}
    
    Основным средством в доказательстве теорем~\ref{t1_2},~\ref{t2},~\ref{t3} служит теорема А. Эренфойхта, доказанная в 1960 году~\cite{Ehren}. В данном разделе мы сформулируем ее частный случай для графов. Прежде всего определим игру Эренфойхта--Фраиссе на двух графах $G,H$ с количеством раундов, равным $r$ (см., например, \cite{Veresh,Survey,Libkin2004,VerbitskyZ17,Janson_RG}). В каждом раунде Новатор выбирает ровно одну вершину в любом из графов, отличную от уже выбранных (если такая существует --- иначе игра заканчивается). Консерватор следом в том же раунде должен выбрать вершину в {\it другом} графе, тоже отличную от уже выбранных. Если Консерватор не может этого сделать, то проигрывает. Если в каждом из графов хотя бы $r$ вершин (или одинаковое число вершин), то к концу игры выбраны вершины $x_1,\ldots,x_r$ и $y_1,\ldots,y_r$ в графах $G$ и $H$ соответственно (если в графах менее $r$ вершин, то выбрано меньшее число вершин, но это лишь меняет обозначение числа вершин и не влияет на дальнейшее описание, поэтому мы будем считать, что выбрано по $r$ вершин в каждом графе). Консерватор побеждает тогда и только тогда, когда отображение, переводящее $x_i$ в $y_i$, является изоморфизмом графов индуцированных на множества $\{x_1,\ldots,x_r\}$ и $\{y_1,\ldots,y_r\}$ (т.е. $x_i\sim x_j$ в $G$ тогда и только тогда, когда $y_i\sim y_j$ в $H$).

\begin{theorem} [А. Эренфойхт, 1960 \cite{Ehren}]
Для любых двух графов $G,H$ и любого $r\in\mathbb{N}$ Консерватор
имеет выигрышную стратегию в игре Эренфойхта--Фраиссе на графах $G,H$ в $r$ раундах тогда и только тогда, когда для любого предложения $\varphi\in\mathcal{F}$ кванторной глубины не более $r$ либо на $G$ и $H$ предложение $\varphi$ истинно, либо на обоих графах ложно.
\label{ehren_th}
\end{theorem}

Теперь поясним, как эту теорему можно применять для получения нижних оценок величины $D[F]$. Предположим, что граф $G$ содержит индуцированный подграф, изоморфный $F$, а $H$ не содержит. Пусть, кроме того, Консерватор побеждает в игре на графах $G$ и $H$ в $r$ раундах. Тогда в силу теоремы~\ref{ehren_th} $D[F]\geq r+1$. Действительно, если бы существовала формула глубины $r$, выражающая свойство $\mathcal{C}[F]$, то тогда бы она различала графы $G$ и $H$, а это противоречило бы теореме.

    \section{Графы, для которых существует нетривиальная запись}
    \label{proof1}
    
    \subsection{Доказательство теоремы~\ref{t1_1}}

    Пронумеруем вершины графа $H$: $h_1, \ldots, h_m$.
    Составим формулу глубины $m + 3$, выражающую существование подграфа, изоморфного $F$. Для этого рассмотрим следующие вспомогательные предикаты. 
    
    Во-первых, предикат
    $$
        P_H(x_1, \ldots, x_m) := \left[\bigwedge\limits_{i<j:\,h_i\sim h_j} (x_i\sim x_j)\right]\wedge\left[\bigwedge_{i<j:\,h_I\nsim h_j}(x_i\nsim x_j \land x_i \neq x_j)\right]
    $$
    является истинным тогда и только тогда, когда индуцированный подграф на вершинах $x_1, \ldots, x_m$ изоморфен графу $H$, причём отображение, переводящее $x_i$ в $h_i$ является изоморфизмом.
    
    Во-вторых, предикат
    $$
        P_0(x_1, \ldots, x_{m + 3}) := P_H(x_4, \ldots, x_{m + 3}) \land \left[\bigwedge\limits_{i = 1}^{3} \bigwedge\limits_{j = i + 1}^{m + 3} (x_i \neq x_j \land x_i \nsim x_j)\right]
    $$
    является истинным тогда и только тогда, когда вершины $x_1,x_2,x_3$ не смежны ни друг с другом ни с $x_4, \ldots, x_{m+3}$, а последние индуцируют $H$, причем $x_{3+i}\to h_i$ --- изоморфизм.
    
    Кроме того, предикат
    $$
        P_1(x_1, \ldots, x_{m + 3}) := P_H(x_4, \ldots, x_{m + 3}) \land x_1 \sim x_2\land\left[\bigwedge_{j=3}^{m+3}(x_1 \neq x_j \land x_1 \nsim x_j)\right] \land \left[\bigwedge\limits_{i = 2,3} \bigwedge\limits_{j = i + 1}^{m + 3} (x_i \neq x_j \land x_i \nsim x_j)\right]
    $$
    является истинным в той же ситуации, что и $P_0$, за исключением того, что вершины $x_1,x_2$ должны быть смежны.
    
    А для истинности предиката
    $$
        P_2(x_1, \ldots, x_{m + 3}) := P_H(x_4, \ldots, x_{m + 3}) \land x_1 \sim x_2 \land x_2 \sim x_3 \land x_1\neq x_3\land x_1\nsim x_3\land \left[\bigwedge_{i=1}^3\bigwedge_{j=4}^{m+3}(x_i \neq x_j \land x_i \nsim x_j)\right]
    $$
    должны быть смежны $x_1$ с $x_2$ и $x_2$ с $x_3$.\\
    
    Тогда искомая формула имеет вид
    \begin{multline*}
        \varphi=\E x_5 \ldots \E x_{m + 4}
         \E x_1\quad \biggl(\biggl[\E x_3 \E x_4\,\,\, P_0(x_1, x_3, x_4, x_5, \ldots, x_{m + 4})\biggr] \land \\
        \biggl[\E x_2\,\,\, \biggl([\E x_4 \, P_1(x_1, x_2, x_4, x_5, \ldots, x_{m + 4})] \land [\E x_3 \, P_2(x_1, x_2, x_3, x_5, \ldots, x_{m + 4})]\biggr)\biggr] \biggr).
    \end{multline*}
    
    Действительно, пусть сперва $G\in\mathcal{C}[F]$. Пронумеруем вершины индуцированного подграфа $\tilde F$, изоморфного $F$, в $G$ числами от $1$ до $m + 4$ таким образом, что $1\sim 2$, $2\sim 3$, $4$ --- изолированная вершина в $\tilde F$, и отображение $h_i\to i+4$ --- изоморфизм графов $H$ и $F_0$. Тогда, очевидно, истинность формулы $\varphi$ следует из того, что становятся истинны все предикаты в ней при подстановке $x_i = i$.\\
    
    С другой стороны, предположим, что $G\models\varphi$. Укажем такую нумерацию    некоторых вершин $G$ числами от $1$ до $m + 4$, что подграф на занумерованных вершинах изоморфен $F$.
    
    Пусть истинность формулы обеспечивается истинностью предикатов $P_0$, $P_1$, $P_2$ на наборах вершин $(x_1,y_1,y_2,x_5,\ldots,x_{m+4})$, $(x_1,x_2,x_4,x_5,\ldots,x_{m+4})$, $(x_1,x_2,x_3,x_5,\ldots,x_{m+4})$ соответственно. Пронумеруем вершины $x_5,\ldots,x_{m+4}$ числами $5, \ldots, m + 4$ соответственно.
    
    Заметим, что вершина $x_2$ не совпадает ни с $y_1$, ни с $y_2$, так как она смежна с $x_1$. Возможны 3 случая:
    \begin{enumerate}
        \item $x_2$ смежна ровно с одной вершиной из $y_1, y_2$. Не умаляя общности, пусть $y_1\sim x_2$. Тогда оставшиеся четыре номера $1,2,3,4$ получат вершины $x_1,x_2,y_1,y_2$ соответственно.

        \item $x_2$ не смежна ни с одной вершиной из $y_1, y_2$. Тогда вершина $x_3$ отлична как от $y_1$, так и от $y_2$.  Если $x_3$ не смежна с вершиной  $y_i$ для некоторого $i \in \{ 1, 2 \}$, то присвоим номера $1,2,3,4$ вершинам $x_1,x_2,x_3, y_i$ соответственно. Иначе ($x_3 \sim y_1$, $x_3 \sim y_2$) эти номера получат вершины $y_1,x_3,y_2,x_1$.
        
        \item $x_2\sim y_1$, $x_2\sim y_2$. Тогда вершина $x_4$ отлична как от $y_1$, так и от $y_2$.  Если для некоторого $i \in \{ 1, 2\}$ $x_4 \nsim y_i$, то номера $1,2,3,4$ получат вершины $x_1,x_2,y_i,x_4$ соответственно.
        Иначе ($y_1 \sim x_4$, $y_2 \sim x_4$) --- вершины $y_1,x_4,y_2,x_1$ соответственно.
    \end{enumerate}
            
    \subsection{Доказательство теоремы~\ref{t1_2}}

        В силу теоремы~\ref{t1_1} достаточно показать, что $D[F] \geq m + 2$. Для этого достаточно построить пару графов $G, G'$ таких, что $G\in\mathcal{C}[F]$,  $G'\notin\mathcal{C}[F]$ и у Консерватора есть выигрышная стратегия в игре Эренфойхта--Фраиссе на графах $G, G'$ в $m + 1$ раунде.\\
        
        Обозначим через $G_m$ граф с множеством вершин $\{1,\ldots,2m + 4\}$, у которого множество ребер описывается следующим образом: 
        $$
         1 \sim 2,\quad 2 \sim 3, \quad 3 \sim 4,
        $$
        $$
         2i + 3\sim 2i+4,\quad i\in\{1,\ldots,m\}.
        $$
        Иными словами, $G_m=P_4\sqcup mP_2$. 
        
        Обозначим $\mathcal{A}_i=\{2i+3,2i+4\}$, $i\in\{1,\ldots,m\}$, $\mathcal{B}=\{1,2,3,4\}$.
        
        Будем рассматривать $G := G_m$ и $G' := G_{m - 1}$. Нетрудно понять, что действительно $G\in\mathcal{C}[F]$,  $G'\notin\mathcal{C}[F]$. Осталось доказать, что у Консерватора есть выигрышная стратегия в игре Эренфойхта--Фраиссе на графах $G, G'$ в $m + 1$ раунде.\\

        Сначала опишем эту стратегию для первых $m - 1$ раундов.
        
        В {\bf первом раунде} Новатор выбирает либо вершину из $\mathcal{A}_i$ для некоторого $i\in\{1,\ldots,m\}$, либо вершину из $\mathcal{B}$. В первом случае Консерватор выбирает в другом графе вершину из $\mathcal{A}_1$. Во втором случае Консерватор выбирает вершину, {\it равную} вершине, выбранной Новатором.
        
        Пусть к началу {\bf раунда $r\in\{2,\ldots,m-1\}$} выбраны вершины $x_1,\ldots,x_{r-1}$ в $G$ и вершины $x_1',\ldots,x_{r-1}'$  в $G'$. Пусть, кроме того, выполнены следующие условия $(r-1).1$ и $(r-1).2$, где
        
        \begin{enumerate}
            \item[$\mu.1$] Для любого $i\in\{1,\ldots,\mu\}$
            $$
            (x_i\in\mathcal{B})\Leftrightarrow(x_i'=x_i).
            $$
            \item[$\mu.2$] Для любых различных $i,j\in\{1,\ldots,\mu\}$ 
            $$
            (\exists s\quad x_i,x_j\in\mathcal{A}_s)\Leftrightarrow
            (\exists s\quad x_i',x_j'\in\mathcal{A}_s).
            $$
        \end{enumerate}
        
        Заметим сразу, что выполнение этих условий гарантирует победу Консерватора в $\mu$ раундах.
        
        Очевидно, какую бы вершину ($x_r$ в $G$ или $x_r'$ в $G'$) ни выбрал бы Новатор, Консерватор сможет выбрать такую вершину ($x_r'$ в $G'$ или $x_r$ в $G$), что будут выполнены условия $r.1$, $r.2$.\\
        
        Итак, пусть {\bf сыграно $m-1$ раундов}. Если хотя бы в двух из них была выбрана вершина из $\mathcal{B}$, то, очевидно, Консерватор сможет в двух последних раундах играть так, что будут выполнены условия $(m+1).1$, $(m+1).2$ и, тем самым, победить.
        
        Если ровно в одном раунде была выбрана вершина из $\mathcal{B}$, то Консерватор сможет в раунде $m$ выбрать такую вершину, что выполнены условия $m.1$, $m.2$. Если в раунде $m$ Новатор выбрал вершину из $\mathcal{B}$, то Консерватор сможет и в раунде $m+1$ выбрать такую вершину, что выполнены условия $(m+1).1$, $(m+1).2$. Поэтому будем предполагать, что в раунде $m$ Новатор выбрал вершину вне $\mathcal{B}$. Кроме того, будем предполагать, что Новатор выбрал вершину в $G$ (иначе, опять же, в раундах $m$ и $m+1$ Консерватор сможет выбрать такие вершины, что выполнены условия $(m+1).1$, $(m+1).2$). Тогда в раунде $m+1$ Новатору не выгодно выбирать вершину из $\mathcal{B}$, так как в этом случае Консерватор сможет просто выбрать ту же самую вершину. Не выгодно Новатору и выбирать вершину из некоторого $\mathcal{A}_s$, в котором уже выбрана некоторая вершина $x_i$, так как в этом случае Консерватор просто выберет вершину из того $\mathcal{A}_{s'}$, в котором лежит $x_i'$. Если же Новатор выберет вершину из единственного множества $\mathcal{A}_s$, в котором еще не выбрано ни одной вершины $x_i$, то Консерватор выберет вершину из $\mathcal{B}$, которая не смежна с единственной вершиной, выбранной в $\mathcal{B}$, и победит.
        
        Пусть, наконец, все $x_i$, $i\in\{1,\ldots,m-1\}$, лежат вне $\mathcal{B}$. Если в раунде $m$ Новатор выберет вершину из $\mathcal{B}$, то Консерватор выбирает такую же вершину, и, очевидно, сможет победить в раунде $m+1$, так как в графе $G'$ существует по крайней мере еще одна вершина (из $\mathcal{B}$), которая не смежна ни с одной из $x_1',\ldots,x_m'$. Если в раунде $m$ Новатор выберет вершину (скажем, в $G$) из некоторого $\mathcal{A}_s$, в котором уже выбрана некоторая вершина $x_i$, $i\in\{1,\ldots,m-1\}$, то Консерватор должен выбрать вершину $x_m'$ из того $\mathcal{A}_{s'}$, в котором лежит $x_i'$. Очевидно, в раунде $m+1$ Консерватор победит, так как, если даже Новатор выберет вершину из единственного оставшегося нетронутым $\mathcal{A}_s$, то Консерватор выберет произвольную вершину из $\mathcal{B}$. Наконец, если в раунде $m$ Новатор выберет вершину (скажем, в $G$) из $\mathcal{A}_s$, которому не принадлежит ни одна вершина $x_i$, $i\in\{1,\ldots,m-1\}$, то Консерватор выберет вершину $x_m'=1\in\mathcal{B}$. Так как в каждом графе
        
        \begin{itemize}
        
        \item у каждой выбранной вершины, не смежной с другими выбранными, есть ровно один сосед, 
        
        \item у каждой выбранный вершины, имеющей соседа среди выбранных, нет других соседей, 
        
        \item ни у какой пары выбранных вершин нет общего соседа, 
        
        \item есть вершина, не смежная ни с одной из выбранных,
        
        \end{itemize}
        
        то Консерватор сможет победить в последнем раунде.

    \section{Графы, для которых не существует нетривиальной записи}
    \label{proof2}
    
    Для доказательства теоремы~\ref{t2}  достаточно построить два графа $G, G'$ такие, что $G\in\mathcal{C}[F]$, $G'\notin\mathcal{C}[F]$ и у Консерватора есть выигрышная стратегия в игре Эренфойхта--Фраиссе на графах $G,G'$ в $\ell - 1$ раунде.
    
    Определим {\it почти многодольный граф} $G^m_{\ell, k}$ как граф на множестве вершин $V := \{  (i, j, h) \mid 1 \leq i \leq\ell, 1 \leq j \leq k, 1 \leq h \leq m \}$,
    и с множеством рёбер 
    $$ 
    E := \{ ( (i_1, j_1, h), (i_2, j_2, h)) \mid \forall i_1, j_1, i_2, j_2, h: i_1 \neq i_2, j_1 \neq j_2 \}  \cup 
    \{ ( (i, j_1, h_1), (i, j_2, h_2)) \mid \forall i, j_1, j_2, h_1, h_2: h_1 \neq h_2\}
    $$
    
    Для вершины $(i, j, h)$ будем называть $i$ --- {\it классом}, $j$ --- {\it долей}, а $h$ --- {\it компонентой} этой вершины, при этом будем говорить, что {\it вершины $(i_1,j_1,h_1)$, $(i_2,j_2,h_2)$  находятся в одной доле}, если номера их долей и компонент совпадают (т.е. $j_1=j_2$, $h_1=h_2$).
    
    Заметим, что если инвертировать все рёбра в этом графе между вершинами с одинаковым классом, то получится дизъюнктное объединение $m$ полных $k$-дольных графов $mK_{\ell, \ldots, \ell}$.
    
    Положим $G=G^m_{\ell, k}$, $G'=G^m_{\ell-1, k}$. Докажем, что это нужные нам графы. Разобьем доказательство на несколько утверждений.
    
    \begin{proposition}
        Для любых $\ell \geq 2, m \geq 1, k \geq 1$ у Консерватора есть выигрышная стратегия в игре Эренфойхта--Фраиссе на графах $G$ и $G'$ в $\ell - 1$ раунде.
        \label{prop:Dupwin}
    \end{proposition}
    \begin{proof}
        Будем строить функцию $f: \range{1}{\ell} \rightarrow  \range{1}{\ell - 1}$, переводящую классы вершин графа $G$ в классы вершин графа $G'$. В начале все значения функции не определены. 
        
        Пусть Новатор в некотором раунде выбрал вершину $(i, j, h)$ в одном из графов $G, G'$ такую, что в этом графе нет другой выбранной вершины с этим же классом. Тогда пусть $i'$ --- такой класс вершины в другом графе, что в нет тоже нет выбранной вершины с классом $i'$. Такой всегда найдётся, так как классов к графах $G, G'$ хотя бы  $\ell - 1$, а раундов всего $\ell - 1$. Консерватор выберет вершину $(i', j, h)$. Если Новатор делал ход в графе $G$, то определим $f(i) = i'$, и $f(i') = i$ иначе.
        
        Если же Новатор в этом раунде выбрал вершину $(i, j, h)$ такую, что в том же графе уже есть выбранные вершины с таким же классом, то либо корректно определена $i':=f(i)$ (при условии что Новатор выбирал вершину в графе $G$), либо корректно определена $i' := f^{-1}(i)$ (при условии что Новатор выбирал вершину в графе $G'$). Тогда Консерватор выбирает вершину $(i', j, h)$ в другом графе.
        
        Так как при описанной стратегии совпадение номеров классов/долей/компонент выбранных вершин в одном графе равносильно совпадению номеров классов/долей/компонент соответствующих выбранных вершин в другом графе, то индуцированные подграфы на выбранных вершинах по окончанию игры будут изоморфны.

    \end{proof}

    \begin{proposition}
    \label{prop:ind}
        Граф $G$ содержит индуцированный подграф, изоморфный $F$.
    \end{proposition}
    \begin{proof}
        Пронумеруем все вершины и компоненты графа $mK_{n_1, \ldots, n_k}$, а также доли каждой компоненты числами от $1$ до $k$. Тогда вершине с номером $i$ из компоненты $h$ этого графа из доли с номером $j$ сопоставим вершину $(i, j, h)$ графа $G$. Это и будет искомое вложение.
    \end{proof}

    Условимся далее, что везде, где речь будет идти о {\it доле образа вершины} из $F$ при вложении $F$ в некоторый почти многодольный граф $A$, будет иметься в виду доля графа $A$, в которой лежит этот образ. Соответственно, когда будет упоминаться доля вершины из $F$, будет иметься в виду доля графа $F$.

    \begin{proposition}
        \label{prop:unipart}
        Пусть $n_k>1$, $k\geq 2$. Если граф $G'$ содержит индуцированный подграф, изоморфный $F$, то не существует двух вершин $v$, $w$, лежащих в разных компонентах графа $F$ таких, что их образы лежат в одной части графа $G'$ и в разных долях графа $G'$.
    \end{proposition}
    \begin{proof}
        Предположим, что такая пара вершин $v, w$ нашлась. Обозначим через $v', w'$ их образы.
        
        Так как $w$, $v$ лежат в разных компонентах, $w' \nsim v'$. Значит, классы $v'$ и $w'$ совпадают.
        
        Выберем $w_0$ такую, что $w_0, w$ лежат в одной компоненте связности, но разных долях. Тогда $w_0 \sim w$. Рассмотрим её образ 
        $w'_0$. 
        
        Если $w'_0$ лежит в отличной от $v'$ части, то тогда так как $w'_0 \sim w'$, то классы $w'_0, w'$ совпадают.
        Но тогда классы $v', w'_0$ совпадают и $v' \sim w'_0$. Противоречие.
        
        Значит, $w'_0$ лежит в одной части с $v'$. Если доли $w'_0$ и $v'$ различны, то так как $v' \nsim w'_0$, 
        классы $w'_0, v$ совпадают. Но тогда классы $w', w'_0$ совпадают и либо $w' = w'_0$, либо $w' \nsim w'_0$. Противоречие.
        
        Значит, $w'_0$ лежит в одной доли с $v'$. Меняя в этом рассуждении $w'$ и $v'$ местами, получаем, что в $F$
        существует $v_0$ с образом $v'_0$, лежащая в одной компоненте с $v$, но в разных с ней долях, и $v'_0$ лежит
        в одной части и одной компоненте с $w'$. Тогда так как $v'_0$, $w'_0$ лежат в разных долях одной части
        и не соединены ребром, то их класс совпадает.
        
        Так как $n_k > 1$, то в одной компоненте связности $F$ есть хотя бы три вершины.
        Значит, существует вершина $v_1$ из одной компоненты с $v, v_0$. При этом, так как $v, v_0$ из разных долей,
        $v_1$ соединена хотя бы с одной из них. Пусть, не умаляя общности, это $v$. Пусть $v'_1$ --- образ $v_1$.
        
        Так как $w'_0, w'$ имеют разные классы, и в долях, в которых лежат $w'_0, w$ эти классы
        заняты отличными от $v'_1$ вершинами, то если $v'_1$ принадлежит одной части с ними, то она смежна и с $w'$, чего не может быть.
        
        Значит, $v'_1$ лежит в другой части. Но, так как $v'_1 \sim v'$, она имеет один класс с $v'$, и следовательно, с $w'_0$. Значит, $v'_1 \sim w'_0$. Противоречие.
        
    \end{proof}

    \begin{proposition}
        \label{prop:noind}
        Пусть $n_k>1$, $k\geq 2$. Тогда граф $G'$ не содержит индуцированный подграф, изоморфный $F$.
    \end{proposition}
    \begin{proof}        
        Предположим противное: $G'\in\mathcal{C}[F]$, и рассмотрим некоторое вложение $F$ (как индуцированного подграфа) в $G'$.\\
        
        Докажем сперва, что в любой компоненте $F$ найдутся две вершины, образы которых лежат в одной части, но в разных долях графа $G'$.\\
        
        Рассмотрим произвольную компоненту $F$.
        
        Так как $n_k > 1$, то найдутся две вершины $v_1, v_2$, лежащие в одной доле в этой компоненте, и $v_0$, лежащая в отдельной доле от $v_1, v_2$ в той же компоненте. Тогда $v_1 \nsim v_2$, $v_1 \sim v_0$, $v_2 \sim v_0$.
        
        Предположим, что все образы $v_0, v_1, v_2$ --- $v'_0, v'_1, v'_2$ соответственно --- лежат в разных частях графа $G'$. Тогда классы $v_0, v_1$  и $v_0, v_2$ должны совпадать. Но тогда $v_1 \sim v_2$. Противоречие.
        
        Предположим, что какие-то два образа лежат в одной части, а третий --- в другой. Если они соединены ребром, то они обязаны лежать в разных долях, и нужное доказано. Иначе образы из одной части --- это $v'_1, v'_2$. Тогда классы $v'_0, v'_1$ и $v'_0, v'_2$ совпадают и, следовательно, $v'_1, v'_2$ лежат в разных долях, что и требовалось.
        
        Наконец, если все три образа лежат в одной части, то $v'_0, v'_1$ должны лежать в разных долях.\\
        
        Значит, для любой компоненты $F$ можно выделить хотя бы одну часть графа $G'$, в двух разных долях которой лежат образы вершин этой компоненты.
        Но тогда, согласно утверждению~\ref{prop:unipart}, образ любой вершины из любой другой компоненты $F$, лежащей в этой части, должен лежать сразу в двух долях,
        чего не может быть. Значит, в этой части могут лежать только образы этой компоненты.
        
        Так как компонент столько же, сколько частей, каждая компонента $F$ вложена целиком в одну часть $G'$.\\
        
        Рассмотрим произвольную компоненту $F$. Так как вершины из разных долей этой компоненты соединены ребром, то их образы не могут лежать в одной доле. Так как долей в рассматриваемой компоненте $F$ столько же, сколько в одной части в $G'$, то образ каждой доли лежит ровно в одной доле.
        
        Предположим теперь, что какие-то два образа $v'_1, v'_2$ вершин $v_1, v_2$ из рассматриваемой компоненты графа $F$ имеют один класс. Тогда $v'_1 \nsim v'_2$ и лежат в разных долях. Значит $v_1 \nsim v_2$, откуда $v_1, v_2$ лежат в одной доле $H$. Но тогда их образы тоже должны лежать в одной доле --- противоречие. Таким образом, образы всех вершин рассматриваемой компоненты графа $F$ имеют попарно различные классы.\\
        
        Наконец, образы вершин из разных частей не могут иметь совпадающего класса, так как иначе, поскольку они лежат в разных частях, они были бы смежны.\\
        
        Следовательно, образы различных вершин из $F$ имеют разные классы в $G'$. Но классов всего $\ell-1$ --- меньше, чем вершин. Противоречие. Следовательно, $G'\notin\mathcal{C}[F]$.
    \end{proof}

        В силу утверждений~\ref{prop:Dupwin},~\ref{prop:ind}~и~\ref{prop:noind} при $k\geq 2$ и $n_k > 1$ теорема доказана.
        
        При $k=1$ $F=mn_1 K_1$, т.е. является пустым графом. В этом случае $D[F]=D[\overline{F}]=\ell$~\cite{VerbitskyZ17}.
        
        Наконец, при $k\geq 2$ и $n_k = 1$ граф $F$ равен дизъюнктному объединению полных графов: $F=mK_k$. Так как $D[F]=D[\overline{F}]$ и $\overline{F}$ --- полный многодольный граф, то при $m>1$ $D[F]=\ell$ по уже доказанному (при $k \geq 2$ $\overline{F}$ попадает под уже рассмотренный случай, если $m > 1$). А при $m = 1$ $F$ --- полный граф, для которого также верно $D[F] = \ell$.

    \section{Графы на 5 вершинах}
    \label{proof3}
    
    Заметим сперва, что для любого графа $F$ и реберного дополнения $\overline{F}$ к нему выполнено $D[F]=D[\overline{F}]$. Поэтому в силу~(\ref{general_lower})
    $$
     D[F]\geq\frac{\max\{e(F),{v(F)\choose 2}-e(F)\}}{v(F)}+2.
    $$
    В частности, для любого графа $F$ на $5$ вершинах и числом ребер, отличном от $5$, выполнено $D[F]\geq\frac{6}{5}+2$, а значит $D[F]\geq 4$. Осталось проверить справедливость этого неравенства для всех графов с ровно $5$ ребрами. Принимая во внимание то, что нам не нужно рассматривать реберные дополнения, мы можем ограничиться следующими четырьмя графами:
    
    \begin{enumerate}
    
    \item простой цикл на 5 вершинах $C_5$;
    
    \item граф $G_{4,1}$, равный объдинению простого цикла на 4 вершинах и ребра, исходящего из одной из вершин цикла к пятой вершине;
    
    \item граф $K_1\sqcup K_{2,1,1}$, равный дизъюнктному объединению вершины и diamond-graph;
    
    \item граф $G_{3,1,1}$, равный объединению треугольника и двух непересекающихся по вершинам ребер, одно из которых соединяет одну вершину треугольника с четвертой вершиной графа, а второе соединяет другую вершину треугольника с пятой вершиной графа.
    
    \end{enumerate}
    
    \subsection{Граф $C_5$}
    
    Итак, докажем сперва, что $D[C_5]\geq 4$. Для этого рассмотрим графы $G=C_5\sqcup C_6$ и $H=C_6\sqcup C_6$. Очевидно, $G\in\mathcal{C}[C_5]$, а $H\notin\mathcal{C}[C_5]$. Осталось доказать, что Консерватор выигрывает в игре Эренфойхта--Фраиссе на графах $G,H$ в 4 раундах. 
    
    Если Новатор в первом раунде отметил вершину в одном из $C_6$, то дальнейшая стратегия Консерватора очевидна. Если же Новатор в первом раунде выбрал некоторую вершину $C_5$, то Консерватор в том же раунде должен выбрать произвольную вершину графа $H$. 
    
    Очевидно, Новатору не выгодно во втором раунде выбирать вершину в одной из двух компонент, ни одна из вершин которых не выбрана в первом раунде. Предположим сначала, что Новатор выбрал еще одну вершину $C_5$. Тогда Консерватор выбирает в ``своем'' $C_6$ вершину, лежащую от выбранной на расстоянии, равном расстоянию между выбранными вершинами в $C_5$ (под расстоянием подразумевается количество ребер в наикратчайшем простом пути, соединяющем рассматрвиаемые вершины). Очевидно, Консерватор сможет победить в третьем раунде. 
    
    Пусть, наконец, во втором раунде Новатор выбрал вершину из $C_6$ в $H$, в котором Консерватором уже выбрана одна вершина. Тогда от предыдущей ситуации отличается лишь случай, в котором Новатор выбирает вершину, находящуюся на расстоянии 3 от уже выбранной. В этом случае Консерватор должен выбрать вершину в $C_6\subset G$. Так как у обеих пар вершин нет общей вершины, а все остальные варианты смежности третьей вершины с ними присутствуют, то в третьем раунде Консерватор сможет победить.
    
    \subsection{Граф $G_{4,1}$}
    
    На рис.~\ref{ris:image1} а) изображены графы $G$ и $H$,  полученные из $C_4$ и $C_5$ соответственно добавлением непересекающихся ребер к каждой из вершин циклов. Очевидно, $G$ содержит индуцированный подграф, изоморфный $F=G_{4,1}$, а $H$ --- нет. Докажем, что у Консерватора есть выигрышная стратегия в игре Эренфойхта--Фраиссе на графах $G,H$ в $3$ раундах.
    
    Для победы Консерватору достаточно просто ``повторять'' ходы Новатора, а именно обеспечить выполнение следующих двух условий:
    
    \begin{enumerate}
    
        \item[1)] выбирать вершины степени 1 тогда и только тогда, когда Новатор в том же раунде выбирает вершины степени 1, и выбирать вершины цикла тогда и только тогда, когда Новатор в том же раунде выбирает вершины цикла;
        
        \item[2)] во втором раунде выбирать вершину, находящуюся от первой на том же расстоянии, что и две соответствующие вершины в графе, выбранном во втором раунде Новатором.
        
    \end{enumerate} 
    
    Очевидно, Консерватор сможет в первых двух раундах следовать условию 1), а во втором раунде следовать условию 2). Тогда, как легко видеть, в третьем раунде он сможет победить.

\begin{figure}[h]
\begin{minipage}[h]{0.49\linewidth}
\center{\includegraphics[width=0.8\linewidth]{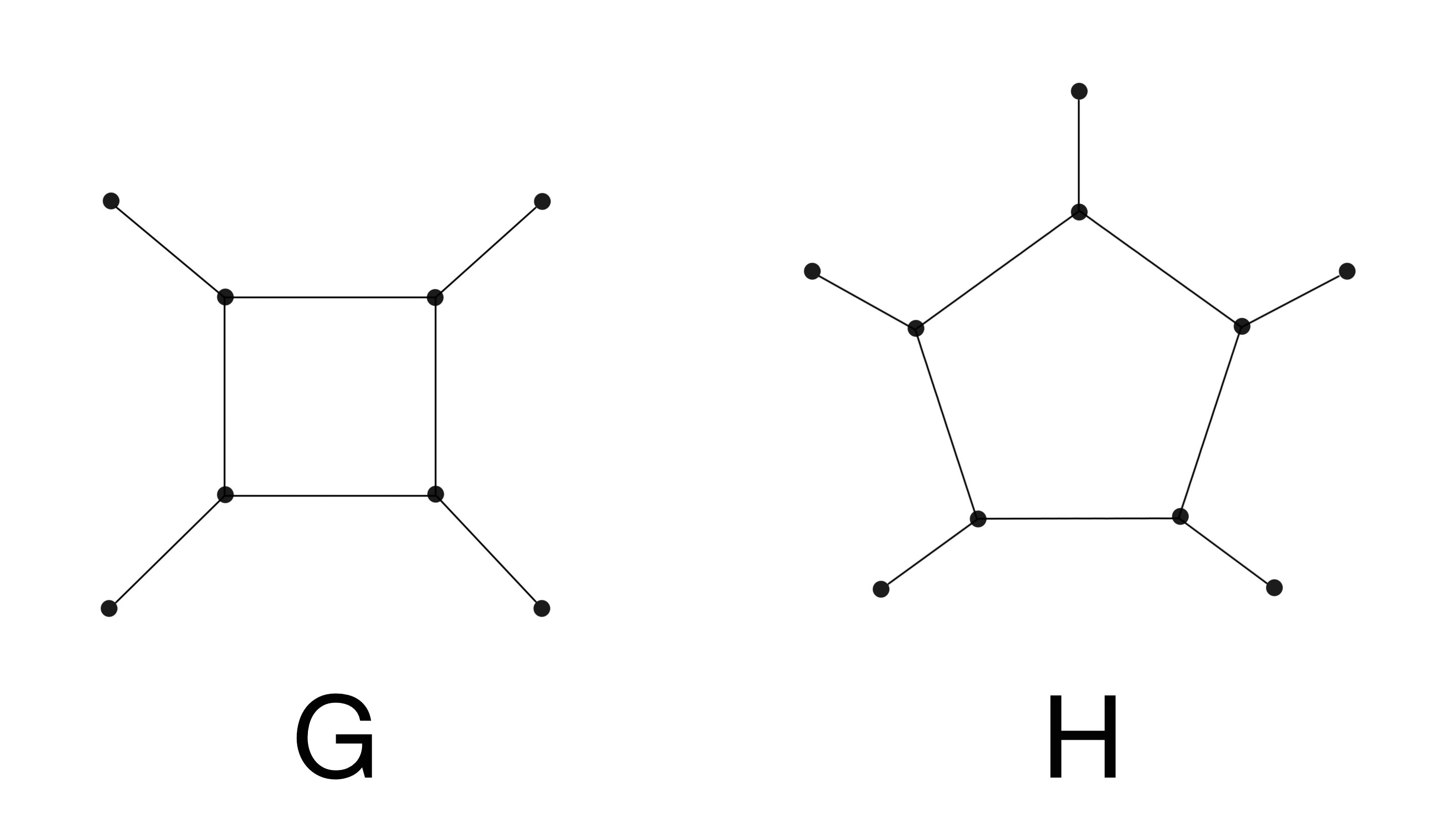} \\ а)}
\end{minipage}
\hfill
\begin{minipage}[h]{0.49\linewidth}
\center{\includegraphics[width=0.8\linewidth]{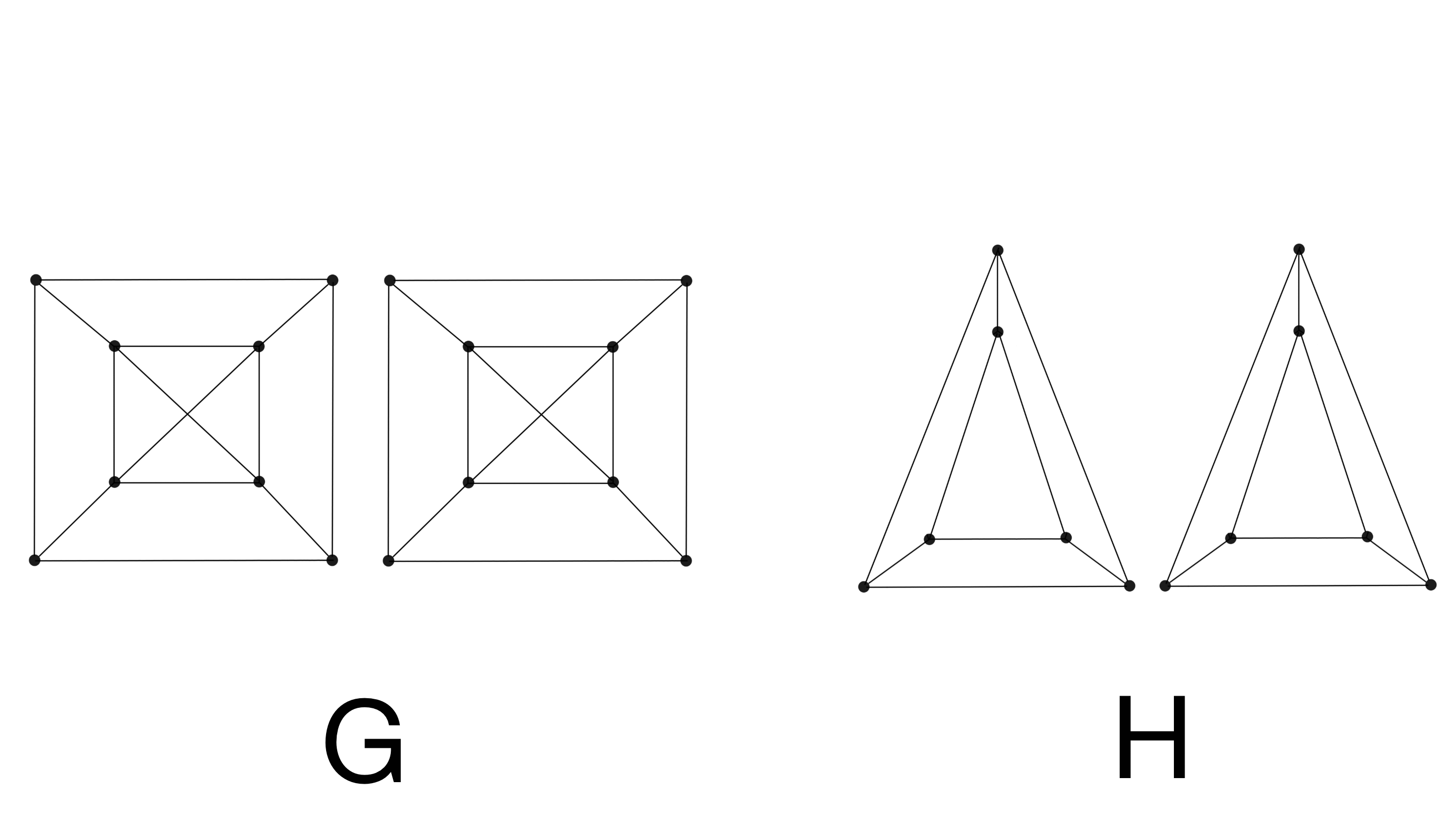} \\ б)}
\end{minipage}
\caption{Графы $G$ и $H$}
\label{ris:image1}
\end{figure}
    
    \subsection{Граф $K_1\sqcup K_{2,1,1}$}
    
        Заметим, что diamond-graph $K_{2,1,1}$ является полным многодольным, а значит, как мы установили в разделе~\ref{proof2}, существует такие графы $G,H$, что $G\in\mathcal{C}[K_{2,1,1}]$, $H\notin\mathcal{C}[K_{2,1,1}]$ и Консерватор выигрывает в игре Эренфойхта--Фраиссе на графах $G,H$ в 3 раундах. Осталось заметить, что отсюда очевидно вытекает, что, во-первых, $2G\in\mathcal{C}[K_1\sqcup K_{2,1,1}]$, $2H\notin\mathcal{C}[K_1\sqcup K_{2,1,1}]$, а, во-вторых, Консерватор выигрывает в игре Эренфойхта--Фраиссе на графах $2G,2H$ в 3 раундах.
    
    \subsection{Граф $G_{3,1,1}$}
    
        На рис.~\ref{ris:image1} б) изображены графы $G$ и $H$,  полученные из $K_4\sqcup C_4$ и $K_3\sqcup K_3$ соответственно добавлением ребер между ``соответсвующими'' вершинами компонент, а затем рассмотрением дизънктных объединений двух копий каждого из полученных графов. Очевидно, $G$ содержит индуцированный подграф, изоморфный $F=G_{3,1,1}$, а $H$ --- нет. Пусть граф $G^*$ получен из графа $G$ добавлением вершины $x^*$, смежной со всеми вершинами $G$, а $H^*$ --- из $H$ добавлением вершины $y^*$, смежной со всеми вершинами $H$. Разумеется, для этих графов верно то же самое: $G^*\in\mathcal{C}[F]$, $H^*\notin\mathcal{C}[F]$. Докажем, что у Консерватора есть выигрышная стратегия в игре Эренфойхта--Фраиссе на графах $G^*,H^*$ в $3$ раундах.\\

        Ниже мы будем говорить, что пара вершин $(u,v)$ некоторого графа $A$ обладает {\it свойством расширения}, если в графе $A$ найдутся четыре других вершины $z_1,z_2,z_3,z_4$, состоящие в разных отношениях смежности с $u,v$, т.е. $u\nsim z_1$, $v\nsim z_1$; $u\sim z_2$, $v\nsim z_2$; $u\nsim z_3$, $v\sim z_3$; $u\sim z_4$, $v\sim z_4$.  Разумеется, если в первых двух раундах игры выбраны вершины $x_1,x_2$ в $G$ и $y_1,y_2$ в $H$ (либо $x_1\sim x_2$, $y_1\sim y_2$, либо $x_1\nsim x_2$, $y_1\nsim y_2$), то если обе пары $(x_1,x_2)$ и $(y_1,y_2)$ обладают свойством расширения, то Консерватор сможет выиграть в третьем раунде.\\
        
        Опишем теперь выигрышную стратегию Консерватора.  Для победы Консерватору достаточно просто обеспечить выполнение следующих двух условий:
    
    \begin{enumerate}
    
        \item[1)] для каждого $i\in\{1,2\}$ в раунде $i$ в $G^*$ выбрана вершина $x^*$ тогда и только тогда, когда в том же раунде в $H^*$ выбрана вершина $y^*$;
        
        \item[2)] если в первых двух раундах не выбрана ни вершина $x^*$, ни вершина $y^*$ (т.е. выбраны вершины $x_1,x_2$ в $G$ и $y_1,y_2$ в $H$), то $x_1\sim x_2$ тогда и только тогда, когда $y_1\sim y_2$. 
        
    \end{enumerate} 
    
    Очевидно, Консерватор сможет в первых двух раундах следовать обоим условиям. Если при этом хотя бы в одном раунде выбраны вершины $x^*,y^*$, то Новатор в том раунде просто потерял ход, так как эти вершины смежны со всеми остальными, и дальнейшая стратегия Консерватора очевидна. Если же вершины $x^*,y^*$ не выбраны, то обе пары $(x_1,x_2)$ и $(y_1,y_2)$ обладают свойством расширения, а значит у Консерватора есть выигрышная стратегия в третьем раунде (действительно, в силу существования вершин $x^*,y^*$, а также в силу того, что оба графа $G$ и $H$ состоят из двух компонент связности, достаточно проверить, что у любых двух вершин в $G$ и в $H$ есть сосед только первой и сосед только второй, а это легко проверяется).

    \section{Открытые вопросы}
    \label{discussions}
    
    Хотя теорема \ref{t1_1} показывает, что существуют сколь угодно большие графы, для которых $D[F] < v(F)$, остаётся открытым вопрос, существуют ли графы, у которых $D[F] < v(F) - 1$. Вполне возможно, что существует граф $F$ на 6 вершинах, для которого $D[F]=4$ (аналогично рассуждению в разделе~\ref{proof3} из~(\ref{general_lower}) незамедлительно следует что графа на 6 вершинах с $D[F]<4$ не существует).
    
    Хочется высказать еще более смелое предположение о существовании такого $\alpha<1$ и графов $F$ сколь угодно большого размера, для которых $D[F]\leq\alpha v(F)$. С точки зрения вклада в задачу о временной сложности проверки $\mathcal{C}[F]$ хочется доказать, что $\alpha$ может быть меньше чем $\frac{\omega}{3}$.

    Наконец, возникает вопрос, можно ли с обобщить теорему \ref{t2}, используя похожую технику, для более широкого класса графов? Например, для некоторых дизъюнктных объединений полных многодольных {\it неизоморфных} графов
    или, скажем, для графов, состоящих из независимых множеств $I_1,\ldots,I_k$ размера хотя бы 2, таких, что между любыми двумя множествами смежности одинаковы (для любых различных $i,j\in\{1,\ldots,k\}$ либо каждая вершина $I_i$ смежна с каждой вершиной $I_j$, либо между $I_i$ и $I_j$ вообще нет ребер).


\begin{thebibliography}{99}

\bibitem{Veresh} Н.К. Верещагин, А, Шень, {\it Языки и исчисления}, М.:~МНЦМО, 2000.

\bibitem{Survey} М.Е. Жуковский, А.М. Райгородский,  ``Случайные графы: модели и предельные характеристики'', {\it Успехи математических наук}, {\bf 70}:1 (2015), 35--88.

\bibitem{Libkin2004} L. Libkin, {\it Elements of Finite Model Theory}, Springer, 2004.


\bibitem{Hypothe} J. Chen, X. Huang, I.A. Kanj, G. Xia, ``Strong computational lower bounds via parameterized complexity'', {\it J. Comput. Syst. Sci.}, {\bf 72}:8 (2006), 1346--1367.

\bibitem{NesetrilP85} J. Ne\v{s}et\v{r}il, S. Poljak, ``On the complexity of the subgraph problem'', {\it Commentat. Math. Univ. Carol.}, {\bf 26} (1985), 415--419.



\bibitem{Eisenbrand} F. Eisenbrand, F. Grandoni, ``On the complexity of fixed parameter clique and dominating set'', {\it Theor. Comput. Sci.}, {\bf 326}:1--3 (2004),~57--67.

\bibitem{Exp_Fast_Matrix} F. Le Gall, ``Powers of tensors and fast matrix multiplication'', In {\it Proc. of the Int. Symposium on Symbolic and Algebraic Computation (ISSAC’14)}, ACM (2014),~296--303.

\bibitem{VerbitskyZ17} O. Verbitsky, M. Zhukovskii, ``On the First-Order Complexity of Induced Subgraph Isomorphism'',  {\it 26th EACSL Annual Conference on Computer Science Logic (CSL 2017)}, Leibniz International Proceedings in Informatics (LIPIcs), Vol. {\bf 82} (2017), 40:1--40:16.

\bibitem{Janson_RG} S. Janson, T. \L uczak, A. Rucinski, {\it Random Graphs}, New York, Wiley, 2000.

\bibitem{VerbitskyZTOCS} O. Verbitsky, M. Zhukovskii, ``The Descriptive Complexity of Subgraph Isomorphism without Numerics'',  {\it Theory of Computing Systems} (2018),~1--20.

\bibitem{ZDAN} M.E. Zhukovskii, ``On First-Order Definitions of Subgraph Isomorphism Properties'', {\it Doklady: Mathematics}, {\bf 96}:2 (2017),~454--456.

\bibitem{Ehren} A. Ehrenfeucht, ``An application of games to the completness problem for formalized theories'', {\it Warszawa, Fund. Math.}, \textbf{49} (1960),~121--149.




\end{thebibliography}
\end{document}